\documentclass[a4paper,11pt]{article}

\usepackage{amsmath,amssymb,amsthm,mathtools}
\usepackage{xcolor}
\usepackage{tikz}
\usetikzlibrary{calc,positioning,backgrounds,arrows.meta}
\usepackage[left=2.6cm,right=2.6cm,top=3.0cm,bottom=3.0cm]{geometry}

\definecolor{cCenter}{RGB}{200,30,30}
\definecolor{cA}{RGB}{31,119,180}
\definecolor{cB}{RGB}{44,160,44}
\definecolor{cC}{RGB}{190,30,30}
\definecolor{cD}{RGB}{148,103,189}
\definecolor{cE}{RGB}{220,110,0}
\definecolor{cF}{RGB}{120,75,60}
\definecolor{gridC}{RGB}{175,175,175}

\theoremstyle{plain}
\newtheorem{theorem}{Theorem}
\newtheorem{lemma}[theorem]{Lemma}
\newtheorem{proposition}[theorem]{Proposition}
\newtheorem{corollary}[theorem]{Corollary}
\theoremstyle{definition}
\newtheorem{definition}[theorem]{Definition}
\theoremstyle{remark}
\newtheorem{remark}[theorem]{Remark}

\newcommand{\A}{\mathbf{A}}
\newcommand{\M}{\mathbf{M}}
\newcommand{\N}{\mathbf{N}}

\begin{document}

\title{\large\bfseries A Mean Value Property in Powers of the Natural Square\\[2pt]
\normalsize \textit{The Center Is Never Lost}}
\author{Paul Jolissaint, Kenichi Takemura}
\date{}
\maketitle
\thispagestyle{empty}

\medskip
\noindent\textbf{2020 Mathematics Subject Classification.}
15B36, 05B15, 20B25.

\medskip
\noindent\textbf{Keywords.}
Natural square matrix; magic square; matrix powers; rotational symmetry;
harmonic function; mean-value property; harmonic matrix.

\bigskip

\section{Introduction: From an Unremarkable Array}

Write the integers $1$ through $9$ into a $3\times 3$ grid, row by row,
left to right. 

\[
  \A_3 =
  \begin{pmatrix}
    1&2&3\\
	4&5&6\\
	7&8&9\\

  \end{pmatrix}.
\]

This is the \emph{natural square matrix of order $3$}.
As a mathematical object, it might seem hardly worth a second glance.
The row sums are all different; it is certainly not a magic square.
There is no obvious column symmetry either.
It is, quite simply, the most straightforward way to arrange the
integers $1$ through $9$ in a square --- a plain, unadorned table.

And yet, let us look at it from a slightly different viewpoint.
Fix the center entry~$5$, and compute the sum of the four entries that
cycle into one another under successive $90^\circ$ rotations about the center
(Figure~\ref{fig:a3orbits}).

\begin{figure}[ht]
\centering
\begin{tikzpicture}[
  cell/.style={minimum width=1.15cm, minimum height=1.15cm,
               font=\normalsize, inner sep=1pt}
]
\def\CW{1.15}\def\CH{1.15}
\foreach \rr/\cc in {0/0, 0/2, 2/2, 2/0}{
  \fill[cA!30,rounded corners=2pt]
    ({\cc*\CW-0.03},{-\rr*\CH+0.03})
    rectangle ({\cc*\CW+\CW+0.03},{-\rr*\CH-\CH-0.03});
}
\foreach \rr/\cc in {0/1, 1/2, 2/1, 1/0}{
  \fill[cB!30,rounded corners=2pt]
    ({\cc*\CW-0.03},{-\rr*\CH+0.03})
    rectangle ({\cc*\CW+\CW+0.03},{-\rr*\CH-\CH-0.03});
}
\fill[cCenter!30,rounded corners=2pt]
  ({1*\CW-0.03},{-1*\CH+0.03})
  rectangle ({1*\CW+\CW+0.03},{-1*\CH-\CH-0.03});
\foreach \x in {0,1,2,3}{
  \draw[gridC,line width=0.35pt]
    ({\x*\CW},0.03) -- ({\x*\CW},{-2*\CH-\CH-0.03});
}
\foreach \y in {0,1,2,3}{
  \draw[gridC,line width=0.35pt]
    (0,{-\y*\CH+0.03}) -- ({3*\CW},{-\y*\CH+0.03});
}
\foreach \val/\cc/\rr in {
    1/0/0,  2/1/0,  3/2/0,
    4/0/1,  5/1/1,  6/2/1,
    7/0/2,  8/1/2,  9/2/2}{
  \node[cell] at ({0.5*\CW+\cc*\CW},{-0.5*\CH-\rr*\CH}) {$\val$};
}
\node[cell,text=cCenter,font=\normalsize\bfseries]
  at ({0.5*\CW+1*\CW},{-0.5*\CH-1*\CH}) {$5$};
\node[anchor=west,align=left,font=\small] at ({3*\CW+0.4},{-0.38}) {%
  \textcolor{cA}{$\blacksquare$}\ Orbit 1 (corners):\\[3pt]
  $\quad 1+3+9+7=20=4\times5$};
\node[anchor=west,align=left,font=\small] at ({3*\CW+0.4},{-1.55}) {%
  \textcolor{cB}{$\blacksquare$}\ Orbit 2 (edge midpoints):\\[3pt]
  $\quad 2+6+8+4=20=4\times5$};
\node[anchor=west,align=left,font=\small] at ({3*\CW+0.4},{-2.65}) {%
  \textcolor{cCenter}{$\blacksquare$}\ Center entry $=5$};
\end{tikzpicture}
\caption{The two rotation orbits of $\A_3$ (shaded) and the center entry (red).
Both the corner sum and the edge-midpoint sum equal $4\times5=20$.}
\label{fig:a3orbits}
\end{figure}

Corners: $1+3+9+7 = 20 = 4\times5$.
Edge midpoints: $2+6+8+4 = 20 = 4\times5$.
In each case, exactly four times the center entry. Moreover, summing over all its $8$ neighbours yields $8$ times its value, or, in other words, the center value is the mean of all its neighbours.

Perhaps this is not so surprising. The entries of $\A_3$ are given by
the simple linear formula $3i+j+1$, so it is not hard to believe that
entries related by rotation balance out. An orderly table, one might
reasonably say, naturally carries an orderly symmetry.

An interesting question is what happens when we apply matrix
multiplication. Let us square $\A_3$:

\[
  \A_3^2 =
  \begin{pmatrix}
    30 & 36 & 42\\
    66 & 81 & 96\\
    102 & 126 & 150
  \end{pmatrix}.
\]

The entries have become tangled together --- nothing here resembles an
orderly array anymore.
Matrix multiplication is a sweeping mixing operation, each entry
computed as a sum of products drawn from an entire row and an entire
column, like threads woven across and through one another.
It would be reasonable to expect that whatever rotational balance
$\A_3$ possessed has been scrambled beyond recognition.

Let us check. Corners: $30+42+150+102 = 324 = 4\times81$.
Edge midpoints: $36+96+126+66 = 324 = 4\times81$.
Center entry: $81$. The same conclusion holds for their mean value.

And yet --- the symmetry has not broken.

Is this a coincidence? Does it hold for $\A_3^3$ and $\A_3^4$ as well?
And what about the $n\times n$ matrix $\A$ of natural squares with $n$ odd?

The answer, as we shall see, is that it holds in every case.
Moreover, the reason has nothing to do with the particular arithmetic
of natural squares: it traces back to a single algebraic property of
their entries --- what we will call \emph{harmonicity} --- and to the
fact that matrix multiplication preserves this property faithfully
through every power.

\section{Giving the Symmetry a Name}

Let us set the stage more carefully.
For any positive odd integer $n$, write $\A_n$ for the $n\times n$ matrix
whose entries are the integers $1, 2, \dots, n^2$ arranged row by row,
left to right. Indexing rows and columns from $0$ to $n-1$, the entry
formula is simply
\[
  (\A_n)_{i,j} \;=\; ni+j+1.
\]
For $n=5$, for instance,
\[
  \A_5 =
  \begin{pmatrix}
    1 & 2 & 3 & 4 & 5\\
    6 & 7 & 8 & 9 &10\\
   11 &12 &13 &14 &15\\
   16 &17 &18 &19 &20\\
   21 &22 &23 &24 &25
  \end{pmatrix}.
\]
When $n$ is odd, there is a unique \emph{center entry} at position
$(m,m)$ where $m=(n-1)/2$: it is $5$ in $\A_3$ and $13$ in $\A_5$.

The quarter-turn clockwise rotation of the index grid about its center is the map
\[
  \rho(i,j) \;=\; \bigl(j,\; n-1-i\bigr).
\]
Working with $0\leq i\leq n-1$ as the row index (increasing downward) and $0\leq j\leq n-1$ as
the column index (increasing rightward), one can check that $\rho$
is indeed a clockwise $90^\circ$ rotation about $(m,m)$.
Since $\rho^4 = \mathrm{id}$, every index pair $(i,j)$ eventually
returns to itself after four steps.
The center $(m,m)$ is the unique fixed point of $\rho$;
every other position belongs to an orbit of length exactly~$4$:
\[
  (i,j),\quad
  (j,\,2m-i),\quad
  (2m-i,\,2m-j),\quad
  (2m-j,\,i).
\]
We call such a set a \emph{rotation orbit} (see Figure~\ref{fig:rot}).

\begin{figure}[ht]
\centering
\begin{tikzpicture}[scale=1.05,
  lbl/.style={font=\small},
  arr/.style={-{Stealth[length=6pt]},thick}
]
\def\S{1.3}
\foreach \c in {0,1,2}{
  \foreach \r in {0,1,2}{
    \draw[gridC,fill=white] (\c*\S,{-\r*\S}) rectangle ({\c*\S+\S},{-\r*\S-\S});
  }
}
\fill[cCenter!20] (1*\S,{-1*\S}) rectangle ({1*\S+\S},{-1*\S-\S});
\foreach \c/\r in {0/0, 2/0, 2/2, 0/2}{
  \fill[cA!22] (\c*\S,{-\r*\S}) rectangle ({\c*\S+\S},{-\r*\S-\S});
}
\foreach \c/\r in {1/0, 2/1, 1/2, 0/1}{
  \fill[cB!22] (\c*\S,{-\r*\S}) rectangle ({\c*\S+\S},{-\r*\S-\S});
}
\foreach \c in {0,1,2}\foreach \r in {0,1,2}{
  \draw[gridC] (\c*\S,{-\r*\S}) rectangle ({\c*\S+\S},{-\r*\S-\S});
}
\node[lbl,font=\scriptsize] at (0.5*\S,{-0.5*\S}) {$(0,0)$};
\node[lbl,font=\scriptsize] at (1.5*\S,{-0.5*\S}) {$(0,1)$};
\node[lbl,font=\scriptsize] at (2.5*\S,{-0.5*\S}) {$(0,2)$};
\node[lbl,font=\scriptsize] at (0.5*\S,{-1.5*\S}) {$(1,0)$};
\node[lbl,text=cCenter] at (1.5*\S,{-1.5*\S}) {$(1,1)$};
\node[lbl,font=\scriptsize] at (2.5*\S,{-1.5*\S}) {$(1,2)$};
\node[lbl,font=\scriptsize] at (0.5*\S,{-2.5*\S}) {$(2,0)$};
\node[lbl,font=\scriptsize] at (1.5*\S,{-2.5*\S}) {$(2,1)$};
\node[lbl,font=\scriptsize] at (2.5*\S,{-2.5*\S}) {$(2,2)$};
\def\Cx{1.5*\S}
\def\Cy{-1.5*\S}
\def\Rin{1.34*\S}
\draw[arr,cA] ({\Cx+\Rin*cos(127)},{\Cy+\Rin*sin(127)})
  arc[start angle=127,end angle=53,radius=\Rin];
\draw[arr,cA] ({\Cx+\Rin*cos(37)},{\Cy+\Rin*sin(37)})
  arc[start angle=37,end angle=-37,radius=\Rin];
\draw[arr,cA] ({\Cx+\Rin*cos(-53)},{\Cy+\Rin*sin(-53)})
  arc[start angle=-53,end angle=-127,radius=\Rin];
\draw[arr,cA] ({\Cx+\Rin*cos(-143)},{\Cy+\Rin*sin(-143)})
  arc[start angle=-143,end angle=-217,radius=\Rin];
\begin{scope}[xshift=4.4cm,yshift=-1.1cm]
  \fill[cA!35,rounded corners=2pt] (0,0) rectangle (0.38,0.38);
  \node[anchor=west,font=\small] at (0.5,0.19) {Orbit 1 (corners): length $4$};
  \fill[cB!35,rounded corners=2pt] (0,-0.65) rectangle (0.38,-0.27);
  \node[anchor=west,font=\small] at (0.5,-0.46) {Orbit 2 (edge midpoints): length $4$};
  \fill[cCenter!35,rounded corners=2pt] (0,-1.3) rectangle (0.38,-0.92);
  \node[anchor=west,font=\small] at (0.5,-1.11) {Center $(1,1)$: length $1$};
\end{scope}
\end{tikzpicture}
\caption{The index grid for $n=3$, showing the two rotation orbits and the center.
The blue arrows illustrate how $\rho$ acts on Orbit~1 (corners):
$(0,0)\to(0,2)\to(2,2)\to(2,0)\to(0,0)$.}
\label{fig:rot}
\end{figure}

We can now state the phenomenon we noticed in $\A_3^2$ with precision.

\begin{theorem}
\label{thm:main}
Let $n$ be a positive odd integer, $\ell$ a positive integer, and $m=(n-1)/2$.
For every rotation orbit
$\{(i,j),\,(j,n{-}1{-}i),\,(n{-}1{-}i,n{-}1{-}j),\,(n{-}1{-}j,i)\}$
of $\A_n^\ell$, the sum of the four corresponding entries equals four times
the center entry:
\[
  (\A_n^\ell)_{i,j}
  +(\A_n^\ell)_{j,\,n-1-i}
  +(\A_n^\ell)_{n-1-i,\,n-1-j}
  +(\A_n^\ell)_{n-1-j,\,i}
  \;=\;4\,(\A_n^\ell)_{m,m}.
\]
\end{theorem}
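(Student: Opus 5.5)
The plan is to recentre the indices at the middle of the grid and show that every power $\A_n^\ell$ has entries of the form $c + (\text{something odd under the half-turn}) + (\text{something of the form } \alpha x y)$. The orbit sum then collapses to four times the centre entry.

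Put $x=i-m$ and $y=j-m$, so that $x,y$ range over $\{-m,\dots,m\}$. The rotation $\rho$ becomes $(x,y)\mapsto(y,-x)$. The orbit of $(x,y)$ is therefore
\[
(x,y),\quad (y,-x),\quad (-x,-y),\quad (-y,x).
\]
Call a matrix \emph{harmonic} if, in these coordinates, its entries have the form
\[
M_{x,y}=c+f(x)+g(y)+\alpha\,xy,
\]
where $f$ and $g$ are odd functions on $\{-m,\dots,m\}$ (that is, $f(-x)=-f(x)$ and $g(-y)=-g(y)$) and $c,\alpha$ are constants. For such a matrix, summing over the orbit makes the $f$ and $g$ terms cancel in pairs. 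The $xy$ terms contribute $xy-yx+xy-yx=0$. Hence the orbit sum equals $4c=4M_{0,0}$, which is four times the centre entry. The theorem therefore reduces to showing that every power $\A_n^\ell$ is harmonic.

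For $\ell=1$ we have $(\A_n)_{x,y}=n(x+m)+(y+m)+1=c+nx+y$, which is harmonic with $\alpha=0$. For the inductive step it is cleaner to work with the rank-two span of $\mathbf{1}$ and $\mathbf{u}$. Here $\mathbf{1}$ is the all-ones vector and $\mathbf{u}$ is the vector with entries $u_x=x$. These satisfy $\mathbf{1}^T\mathbf{1}=n$, $\mathbf{u}^T\mathbf{1}=0$ and $\mathbf{u}^T\mathbf{u}=s:=\sum x^2$. We can write
\[
\A_n = c\,\mathbf{1}\mathbf{1}^T + n\,\mathbf{u}\mathbf{1}^T + \mathbf{1}\mathbf{u}^T .
\]
Every product of matrices of the form $\sum_{p,q\in\{\mathbf{1},\mathbf{u}\}} \beta_{pq}\,p q^T$ stays in that family, because $(pq^T)(p'q'^T)=(q^Tp')\,pq'^T$. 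So by induction
\[
\A_n^\ell=a\,\mathbf{1}\mathbf{1}^T+b\,\mathbf{u}\mathbf{1}^T+b'\,\mathbf{1}\mathbf{u}^T+\alpha\,\mathbf{u}\mathbf{u}^T
\]
for some scalars $a,b,b',\alpha$. Entrywise this reads $a+bx+b'y+\alpha xy$, which is harmonic with $f(x)=bx$ and $g(y)=b'y$ linear, hence odd. This proves the theorem, and in this closed form the centre entry is simply $a$.

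The main obstacle is recognising that the $\mathbf{u}\mathbf{u}^T$ term genuinely appears from $\ell=2$ on, so one cannot just prove that the entries stay affine in $(x,y)$. The point is to check that the $xy$ term is nonetheless killed by the quarter-turn, since $xy\mapsto -xy$ under $(x,y)\mapsto(y,-x)$. Once the rank-two algebra closure is in place, the rest is bookkeeping. For instance, $\A_3^2$ has $\alpha=3\cdot 0+\dots$, and its corner sum $30+42+150+102=4\cdot81$ is consistent with this form.
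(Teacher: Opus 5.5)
Your proof is correct and follows the paper's route. You show that the powers of $\A_n$ stay in the class with entries $\alpha ij+\beta i+\gamma j+\delta$ (your $a+bx+b'y+\alpha xy$ is exactly this class after recentring), and that every such matrix has quarter-turn orbit sums equal to four times the centre entry. The differences are in execution: you prove closure structurally, via $(VBV^T)(VB'V^T)=V(BGB')V^T$ with $V=[\mathbf{1},\mathbf{u}]$ and $G=V^TV$, instead of expanding power sums, and you get the orbit identity by parity instead of expanding the four entries.

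One blemish: the closing sanity check ``$\alpha=3\cdot 0+\dots$'' is garbled. At $\ell=2$ the $\mathbf{u}\mathbf{u}^T$ coefficient is $\alpha=n^2$, and indeed $\A_3^2$ has entries $81+45x+15y+9xy$.
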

As we will observe, the above result is a consequence of specific property that we call \textit{harmonicity} since it has a consequence that reminds the mean property for harmonic functions.

\section{Harmonicity: Naming the Hidden Structure}

The structure we are looking for can be described very simply.

\begin{definition}
\label{def:harmonic}
An $n\times n$ matrix $\M$ (with indices $0\le i,j\le n-1$) is called
\emph{harmonic} if there exist constants $\alpha,\beta,\gamma,\delta$ such that
\[
  M_{i,j} \;=\; \alpha\,ij+\beta\,i+\gamma\,j+\delta.
\]
In other words, $M_{i,j}$ is an affine-linear function of $i$ for each
fixed $j$, and an affine-linear function of $j$ for each fixed $i$.
For a harmonic matrix $\M$, we define its \emph{continuous extension}
\[
  \widetilde M(x,y)\;=\;\alpha xy+\beta x+\gamma y+\delta,
  \qquad (x,y)\in\mathbb{R}^2,
\]
using the same coefficients $\alpha,\beta,\gamma,\delta$: that is, we
extend $\M$ from the integer lattice $\{0,\dots,n-1\}^2$ to $\mathbb{R}^2$
by evaluating the same polynomial. By construction,
$\widetilde M(i,j)=M_{i,j}$ for $0\le i,j\le n-1$.
\end{definition}

\begin{remark}
This name has a precise mathematical justification. Extending the
indices $(i,j)$ to real variables $(x,y)\in\mathbb{R}^2$ and regarding
\[
  f(x,y)=\alpha xy+\beta x+\gamma y+\delta
\]
as a function of two real variables, we have
\[
  \frac{\partial^2 f}{\partial x^2}=0,\qquad
  \frac{\partial^2 f}{\partial y^2}=0,
\]
so that $f$ satisfies Laplace's equation
$\Delta f=\partial^2f/\partial x^2+\partial^2f/\partial y^2=0$; that is,
$f$ is a special case of a \emph{harmonic function} in the ordinary sense.

However, the four-vertex mean-value property established below at
centrally symmetric points (Lemma~\ref{lem:orbit}, Corollary~\ref{cor:square})
is \emph{not} a property of harmonic functions in general. For instance,
\[
  f(x,y)=\operatorname{Re}(z^4)
  =x^4-6x^2y^2+y^4,
  \quad z=x+iy,
\]
is also harmonic, yet the average of its values at the four vertices of
a square centered at the origin does not, in general, equal the center
value $f(0,0)=0$, as can be easily verified on $\{\pm 1,\pm i\}$ for instance.

The four-vertex mean-value property of the present paper follows not
merely from harmonicity, but from the much more special fact that
\[
  f(x,y)\in\operatorname{span}\{1,x,y,xy\},
\]
i.e., $f$ has degree at most $1$ in $x$ and in $y$ separately, and
contains no term of degree $2$ or higher in either variable. It is this
special structure that causes every term depending on the displacement
from the center to cancel when averaged over four points related by a
$90^\circ$ rotation.

More concretely, fix a center $(a,b)\in\mathbb{R}^2$ and translate
coordinates so that this center becomes the origin, writing
\[
  g(u,v)\;=f(a+u,\,b+v).
\]
Expanding, $g$ again has the same form,
\[
  g(u,v)\;=\;\delta'+\beta' u+\gamma' v+\alpha' uv,
\]
for suitable constants $\alpha',\beta',\gamma',\delta'$ depending on
$a,b,\alpha,\beta,\gamma,\delta$; in particular $\delta'=g(0,0)=f(a,b)$
is exactly the value at the center. Writing $u=r\cos\theta$,
$v=r\sin\theta$, we obtain
\[
  g(u,v)
  =
  \delta'
  +\beta' r\cos\theta
  +\gamma' r\sin\theta
  +\frac{\alpha' r^2}{2}\sin 2\theta.
\]
Summing this over the four points at angles
$\theta,\ \theta+\tfrac{\pi}{2},\ \theta+\pi,\ \theta+\tfrac{3\pi}{2}$
causes every term of angular frequency $1$ and $2$ to cancel, leaving
only the constant term $\delta'=f(a,b)$; hence the average of the four
values of $f$ at these points, each translated back by $(a,b)$, equals
the value of $f$ at the center $(a,b)$.

We emphasize that the term ``harmonic matrix'' in this paper refers
specifically to the class of matrices of the form
$M_{i,j}=\alpha ij+\beta i+\gamma j+\delta$; the corresponding function
$f(x,y)=\alpha xy+\beta x+\gamma y+\delta$ is, in addition, a genuine
harmonic function in the classical sense.
\end{remark}

The natural square $\A_n$ is immediately seen to be harmonic:
the formula $(\A_n)_{i,j}=ni+j+1$ corresponds to $\alpha=0$,
$\beta=n$, $\gamma=1$, $\delta=1$ --- a basic example of a harmonic
matrix with no $ij$ term at all.

\bigskip

Now, let $\M$ be a $n\times n$ harmonic matrix where $n$ is an odd integer, and let $m$ be as in Theorem \ref{thm:main}. For a fixed integer $1\leq k\leq m$,  let $C_k$ be the union of the sides of the square at "distance" $k$ from $(m,m)$. See Figure 3

The four sides of $C_k$ are parametrized as follows (see Figure 4):
\begin{enumerate}
\item $C_{k,1}=\{(m-k+r,m-k)\colon 0\leq r\leq 2k-1\}$ ;
\item $C_{k,2}=\{(m+k,m-k+r)\colon 0\leq r\leq 2k-1\}$ ;
\item $C_{k,3}=\{(m+k-r,m+k)\colon 0\leq r\leq 2k-1\}$ ;
\item $C_{k,4}=\{(m-k,m+k-r)\colon 0\leq r\leq 2k-1\}$.
\end{enumerate}

\begin{figure}
\begin{center}
\includegraphics*[height=5cm]{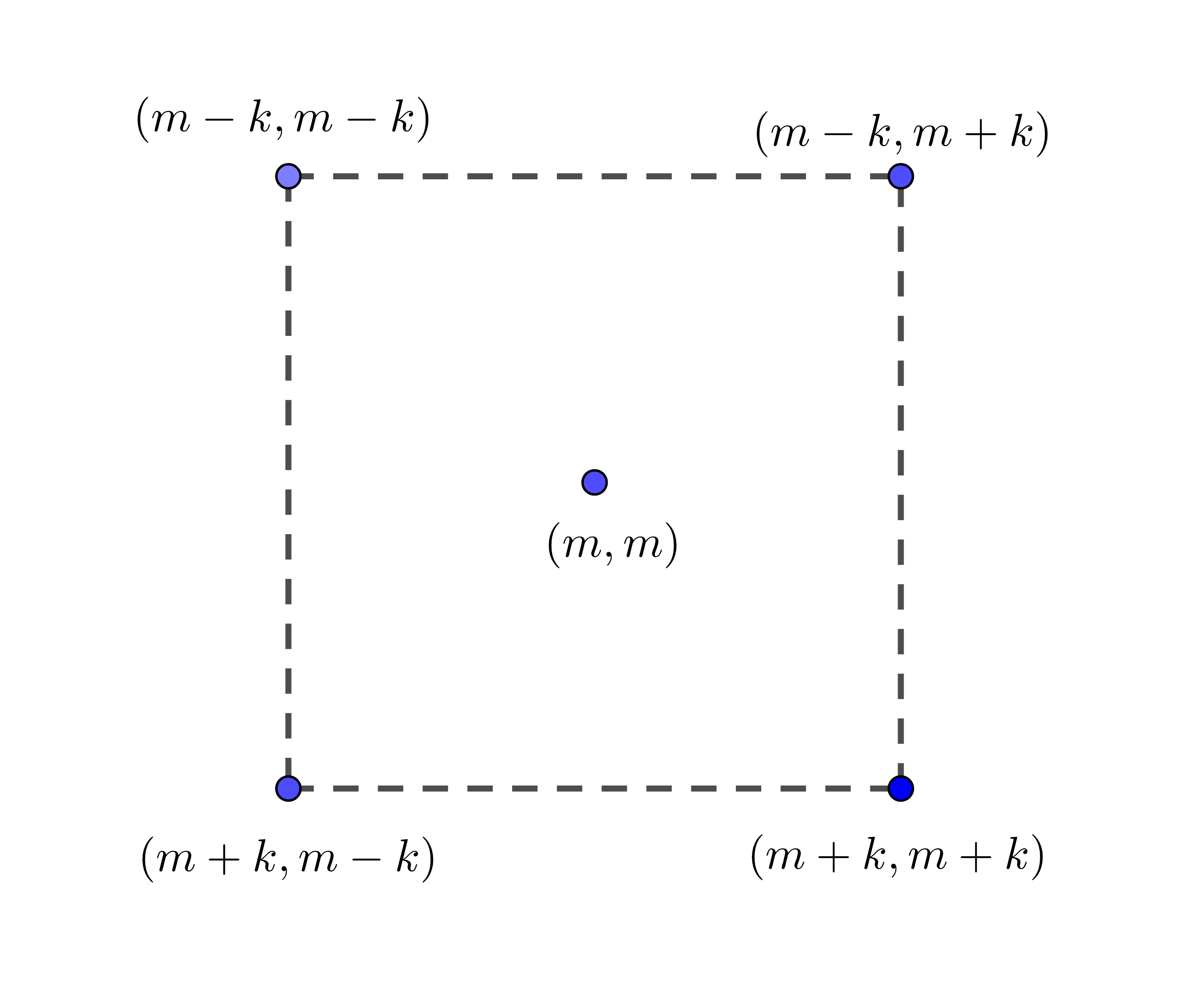}
\end{center}
\caption{The corner vertices of $C_k$ and its center.}
\end{figure}

Thus, $|C_k|=4\cdot(2k)=8k$. Then we have the following mean-value property:

\begin{theorem}\label{thm:mean}
With the above hypotheses and notation, the matrix $\M$ has the following mean value property: 

\[
\frac{1}{8k}\sum_{(i,j)\in C_k}M_{i,j}=\alpha m^2+\beta m+\gamma m+\delta=M_{m,m}.
\]
\end{theorem}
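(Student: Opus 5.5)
The plan is to exploit the fact that $C_k$ is invariant under the quarter-turn $\rho$ about $(m,m)$. This reduces the sum over $C_k$ to sums over rotation orbits, and each orbit sum is handled by the four-point cancellation described in the Remark.

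\textbf{Step 1: the parametrization is a union of orbits.} First I check that $\rho$ maps each side onto the next one, preserving the parameter $r$. Since $\rho(i,j)=(j,2m-i)$, the point $(m-k+r,\,m-k)\in C_{k,1}$ goes to $(m-k,\,m+k-r)\in C_{k,4}$. Likewise $C_{k,4}\to C_{k,3}\to C_{k,2}\to C_{k,1}$, each with the same $r$; this is a direct substitution. Hence $C_k$ is the disjoint union, over $r=0,\dots,2k-1$, of the $\rho$-orbits of the points $p_r=(m-k+r,\,m-k)$. Each such orbit has exactly four elements, because $k\ge 1$ means $p_r\neq(m,m)$. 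The four sides are pairwise disjoint since each side omits its final corner, and this also confirms $|C_k|=8k$.

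\textbf{Step 2: the four-point identity.} For any point $(i,j)$, write $u=i-m$ and $v=j-m$. The orbit of $(i,j)$ then corresponds to the displacements $(u,v)$, $(v,-u)$, $(-u,-v)$, $(-v,u)$. Expanding $\widetilde M$ about $(m,m)$ as in the Remark gives
\[
\widetilde M(m+u,m+v)=M_{m,m}+\beta' u+\gamma' v+\alpha' uv,
\]
with $\beta'=\alpha m+\beta$, $\gamma'=\alpha m+\gamma$, and $\alpha'=\alpha$. Summing over the four displacements, the linear terms cancel because the displacements come in opposite pairs. The product terms give $uv-vu+uv-vu=0$. Therefore the orbit sum equals $4M_{m,m}$. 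This computation is short and concrete, so I would write it out directly rather than rely on the polar-coordinate argument.

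\textbf{Step 3: combine.} Summing the identity of Step 2 over the $2k$ orbits of Step 1 gives $\sum_{C_k}M_{i,j}=2k\cdot 4M_{m,m}=8kM_{m,m}$. Finally, $M_{m,m}=\alpha m^2+\beta m+\gamma m+\delta$ by the definition of a harmonic matrix.

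The main obstacle is bookkeeping in Step 1: I must make sure the half-open parametrization with $0\le r\le 2k-1$ covers every boundary point exactly once and that $\rho$ permutes the sides in the way claimed. If the orientation of $\rho$ turns out to be reversed, it would send $C_{k,1}$ to $C_{k,2}$ instead of $C_{k,4}$. That does not matter, since all that is needed is that $C_k$ is $\rho$-invariant and free of fixed points.
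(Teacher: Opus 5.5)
Your proposal is correct and follows the paper's proof: the paper likewise writes $C_k$ as the union of the $2k$ rotation orbits of the points $(m-k+r,\,m-k)$, $0\le r\le 2k-1$, and applies the orbit-sum identity (Lemma~\ref{lem:orbit}) to each orbit. There are two minor differences, both fine. You verify the orbit decomposition, namely side-to-side mapping with the same $r$, disjointness and orbit length four, which the paper only asserts. You also prove the four-point identity by recentering at $(m,m)$, whereas the paper expands the four entries coefficient by coefficient.
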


\begin{figure}
\begin{center}
\includegraphics*[height=5cm]{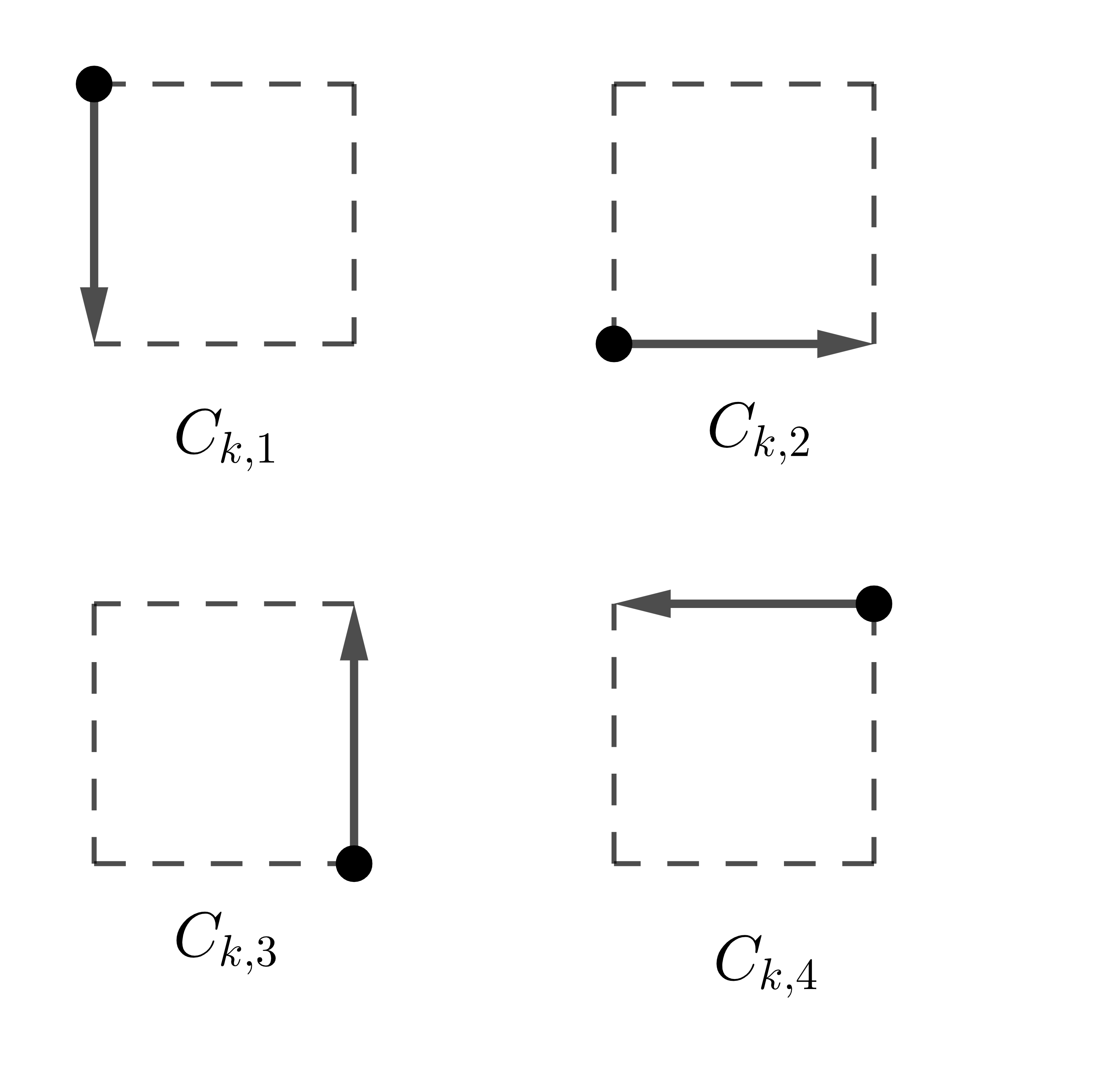}
\end{center}
\caption{The four sides of $C_k$.}
\end{figure}

\section{Proofs of Theorems~\ref{thm:main} and \ref{thm:mean}} 

We need two lemmas for the proofs of Theorems~\ref{thm:main} and~\ref{thm:mean}.

\begin{lemma}[Closure under multiplication]
\label{lem:closure}
If $\M$ and $\N$ are harmonic $n\times n$ matrices, then so is their
product $\M\N$.
\end{lemma}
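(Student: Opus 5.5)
The plan is a direct computation of $(\M\N)_{i,j}$ from the defining formula of harmonicity in Definition~\ref{def:harmonic}. Write $M_{i,k}=\alpha ik+\beta i+\gamma k+\delta$ and $N_{k,j}=\alpha' kj+\beta' k+\gamma' j+\delta'$. The key observation is that each factor is affine-linear in the outer index once the summation index $k$ is fixed. Concretely, we regroup
\[
  M_{i,k}=(\alpha k+\beta)\,i+(\gamma k+\delta)=p_k\,i+q_k ,
  \qquad
  N_{k,j}=(\alpha' k+\gamma')\,j+(\beta' k+\delta')=r_k\,j+s_k ,
\]
where $p_k,q_k,r_k,s_k$ depend only on $k$ and the coefficients, and not on $i$ or $j$.

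Next we expand the product and sum over $k$:
\[
  (\M\N)_{i,j}=\sum_{k=0}^{n-1}(p_k i+q_k)(r_k j+s_k)
  =\Bigl(\sum_k p_kr_k\Bigr)ij+\Bigl(\sum_k p_ks_k\Bigr)i+\Bigl(\sum_k q_kr_k\Bigr)j+\sum_k q_ks_k .
\]
The four bracketed sums are constants independent of $(i,j)$, so they serve as the new coefficients $\alpha'',\beta'',\gamma'',\delta''$. This shows that $\M\N$ is harmonic. If one wants them explicitly, they are polynomials in the original coefficients and the power sums $\sum_k 1=n$, $\sum_k k=n(n-1)/2$, $\sum_k k^2=(n-1)n(2n-1)/6$; this is not needed for the lemma itself.

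There is no serious obstacle. The only point requiring care is to keep the summation index separate from the free indices $i,j$. One should also note that no product $ij\cdot ij$ or $i^2$ can arise, because $i$ appears only in the left factor and $j$ only in the right factor, each to the first power. Iterating the lemma then gives that every power $\A_n^\ell$ is harmonic, which is the form in which it will feed into Theorem~\ref{thm:main}.
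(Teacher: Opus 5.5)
Your proof is correct and is essentially the paper's argument: the paper likewise expands $(\M\N)_{i,j}=\sum_{\ell}M_{i,\ell}N_{\ell,j}$ and collects the $ij$, $i$, $j$ and constant terms. Your regrouping $M_{i,k}=p_k i+q_k$, $N_{k,j}=r_k j+s_k$ makes explicit why no other terms can appear, so, as you note, the power-sum formulas the paper cites are not needed.
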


\begin{proof}
Write $M_{i,j}=\alpha_1 ij+\beta_1 i+\gamma_1 j+\delta_1$ and
$N_{i,j}=\alpha_2 ij+\beta_2 i+\gamma_2 j+\delta_2$.
The $(i,j)$ entry of $\M\N$ is
\[
  (\M\N)_{i,j}
  \;=\;\sum_{\ell=0}^{n-1}
    (\alpha_1 i\ell+\beta_1 i+\gamma_1 \ell+\delta_1)
    (\alpha_2 \ell j+\beta_2 \ell+\gamma_2 j+\delta_2).
\]
Expanding and summing over $\ell$, we apply the standard identities
\[
  \sum_{\ell=0}^{n-1}1=n,\quad
  \sum_{\ell=0}^{n-1}\ell=\tfrac{n(n-1)}{2},\quad
  \sum_{\ell=0}^{n-1}\ell^2=\tfrac{n(n-1)(2n-1)}{6}.
\]
Each sum over $\ell$ becomes a constant depending only on
$\alpha_\nu,\beta_\nu,\gamma_\nu,\delta_\nu$ and $n$.
Collecting the $ij$, $i$, $j$, and constant terms gives
$(\M\N)_{i,j}=Aij+Bi+Cj+D$ for constants $A,B,C,D$.
\end{proof}

Since $\A_n$ is harmonic and the class is closed under multiplication,
it follows by induction that $\A_n^k$ is harmonic for every positive
integer~$k$.

The next lemma carries the geometric heart of the argument.

\begin{lemma}[Orbit-sum identity]
\label{lem:orbit}
Let $\M$ be a harmonic $n\times n$ matrix with $n$ odd, and let
$m=(n-1)/2$. For any index pair $(i,j)$,
\[
  M_{i,j}+M_{j,n-1-i}+M_{n-1-i,n-1-j}+M_{n-1-j,i}
  \;=\;4\,M_{m,m}.
\]
\end{lemma}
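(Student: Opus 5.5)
The plan is to translate coordinates so the center $(m,m)$ becomes the origin, following the Remark. Since $n-1=2m$, set $u=i-m$ and $v=j-m$. The rotation $\rho$ then acts on displacements as $(u,v)\mapsto(v,-u)$: the four positions of the orbit $(i,j),\ (j,2m-i),\ (2m-i,2m-j),\ (2m-j,i)$ have displacements
\[
(u,v),\quad (v,-u),\quad (-u,-v),\quad (-v,u).
\]

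Next, write the entries of $\M$ in centered form. Substituting $i=m+u$ and $j=m+v$ into $M_{i,j}=\alpha ij+\beta i+\gamma j+\delta$ and expanding gives
\[
M_{i,j}=M_{m,m}+\beta' u+\gamma' v+\alpha uv,
\]
with $\beta'=\alpha m+\beta$ and $\gamma'=\alpha m+\gamma$. The constant term is exactly $\alpha m^2+\beta m+\gamma m+\delta=M_{m,m}$.

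Now evaluate this expression at the four displacements and add the results:
\begin{itemize}
\item The linear terms contribute $\beta'(u+v-u-v)+\gamma'(v-u-v+u)=0$.
\item The bilinear terms contribute $\alpha\bigl(uv+v(-u)+(-u)(-v)+(-v)u\bigr)=\alpha(uv-uv+uv-uv)=0$.
\end{itemize}
Only $4M_{m,m}$ survives, which is the claimed identity.

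The step most likely to go wrong is computing the orbit displacements correctly from the paper's formula $\rho(i,j)=(j,n-1-i)$, in particular the signs in $(j-m,\,m-i)=(v,-u)$. A second point to check is that the formula $M_{i,j}=\alpha ij+\beta i+\gamma j+\delta$ is being applied only at valid indices; this holds because all four orbit positions lie in $\{0,\dots,n-1\}^2$ whenever $(i,j)$ does.

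Theorem~\ref{thm:main} then follows by combining this lemma with Lemma~\ref{lem:closure}, which makes $\A_n^\ell$ harmonic.
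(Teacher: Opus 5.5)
Your proof is correct. It is the same kind of direct polynomial check as the paper's, but the computation is split up differently.

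The paper stays in raw coordinates and sums the four orbit entries coefficient by coefficient. The $\alpha$-, $\beta$-, $\gamma$- and $\delta$-parts contribute $4m^2$, $4m$, $4m$ and $4\delta$, which it then recognizes as $4M_{m,m}$. You first recenter at $(m,m)$, which is the translation the paper uses only informally in its remark on harmonicity. As a result, $M_{m,m}$ appears at once as the constant term.

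Your version makes the mechanism visible. The linear part vanishes because the displacements $(u,v),(v,-u),(-u,-v),(-v,u)$ sum to zero. The bilinear part vanishes because $uv$ changes sign under a quarter turn. This also shows why the whole four-point orbit is needed: the antipodal pair alone gives $2M_{m,m}+2\alpha uv$. That fails as soon as $\alpha\neq 0$, for instance in $\A_3^2$, where $30+150\neq 2\cdot 81$.

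The paper's version avoids the change of variables and is purely mechanical. Neither argument uses integrality of $i,j$ or $m$, beyond $M_{m,m}$ being an actual entry. So both carry over verbatim to Corollary~\ref{cor:square} and Remark~\ref{rem:even}.
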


\begin{proof}
Write $M_{i,j}=\alpha ij+\beta i+\gamma j+\delta$ and $n-1=2m$.
The four orbit entries are:
\begin{align*}
  M_{i,j}            &=\alpha ij +\beta i +\gamma j +\delta,\\
  M_{j,\,2m-i}       &=\alpha j(2m-i)+\beta j+\gamma(2m-i)+\delta,\\
  M_{2m-i,\,2m-j}    &=\alpha(2m-i)(2m-j)+\beta(2m-i)+\gamma(2m-j)+\delta,\\
  M_{2m-j,\,i}       &=\alpha i(2m-j)+\beta(2m-j)+\gamma i+\delta.
\end{align*}
We sum the contribution of each coefficient separately.

\smallskip
\noindent\textit{The $\alpha$ terms:}
\begin{align*}
  &ij+j(2m-i)+(2m-i)(2m-j)+i(2m-j)\\
  &=ij+2mj-ij+4m^2-2mj-2mi+ij+2mi-ij=4m^2.
\end{align*}

\noindent\textit{The $\beta$ terms:}
$i+j+(2m-i)+(2m-j)=4m.$

\noindent\textit{The $\gamma$ terms:}
$j+(2m-i)+(2m-j)+i=4m.$

\noindent\textit{The $\delta$ terms:}
$4\delta.$

Adding everything together, the orbit sum is
\[
  4\alpha m^2+4\beta m+4\gamma m+4\delta
  \;=\;4(\alpha m^2+\beta m+\gamma m+\delta)
  \;=\;4\,M_{m,m}. \qedhere
\]
\end{proof}

\begin{proof}[Proof of Theorem~\ref{thm:main}]
By Lemma~\ref{lem:closure} and induction, $\A_n^k$ is harmonic for
every positive integer $k$. Applying Lemma~\ref{lem:orbit} to $\A_n^k$
gives the desired identity.
\end{proof}

\begin{proof}[Proof of Theorem \ref{thm:mean}]
The equality follows from the fact that $C_k$ is the union of the orbits 
\[
\{(i,j), (j,2m-i), (2m-i,2m-j), (2m-j,i): i=m-k+r, j=m-k, 0\leq r\leq 2k-1\}.
\]
See Figure 5.
\end{proof}

\bigskip

\begin{figure}
\begin{center}
\includegraphics*[height=5cm]{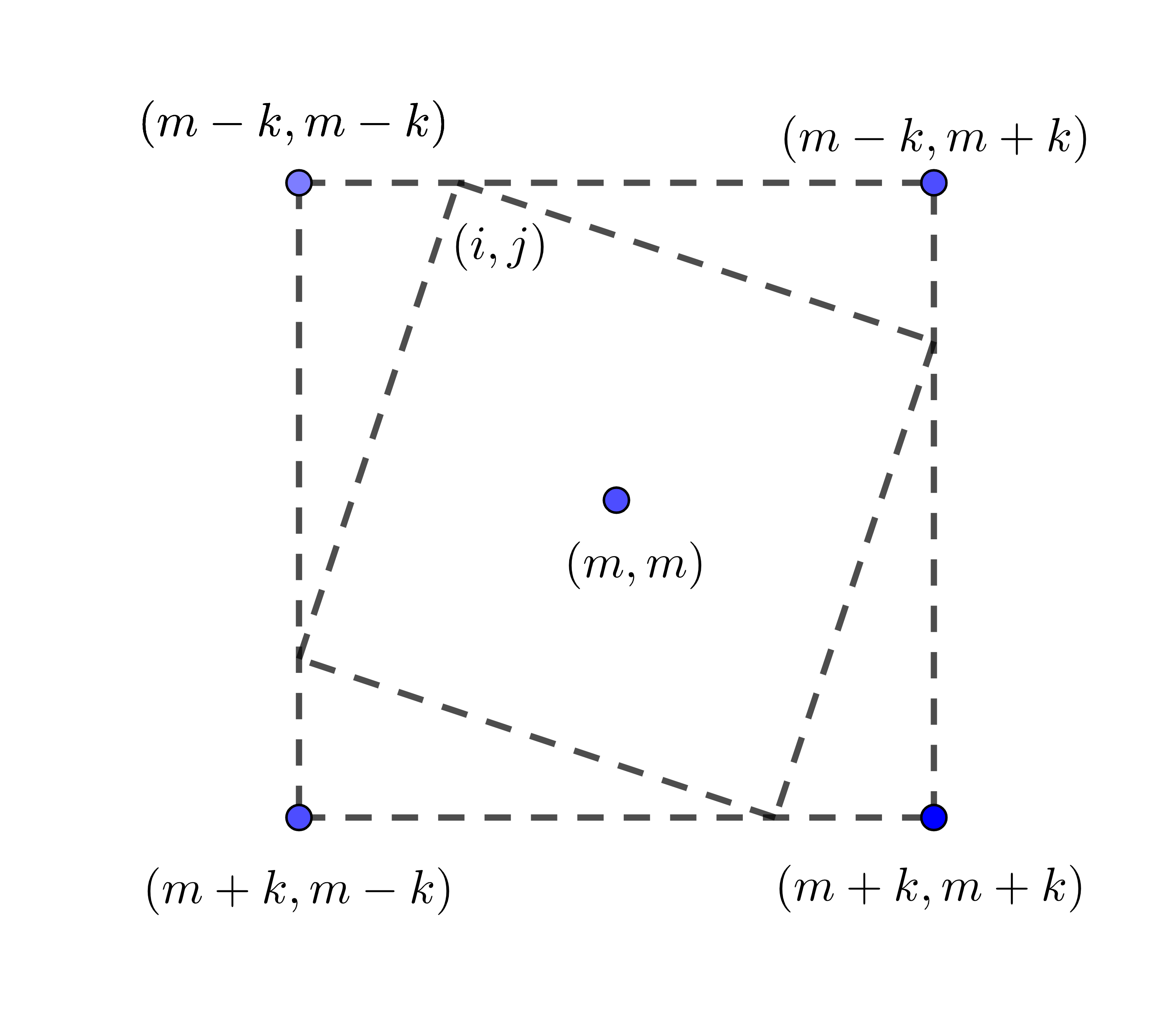}
\end{center}
\caption{$C_k$ is the union of the orbits of its upper row.}
\end{figure}

\begin{corollary}[Mean-value property over an arbitrary square]
\label{cor:square}
Let $\M$ be a harmonic $n\times n$ matrix and let $\widetilde M$ be its
continuous extension. Let $m=(n-1)/2$. For any real numbers $p,q$,
the four points related by a $90^\circ$ rotation about $(m,m)$,
\[
  (m+p,\,m+q),\quad (m-q,\,m+p),\quad (m-p,\,m-q),\quad (m+q,\,m-p),
\]
that is, the four vertices of a square of side length $\sqrt{2(p^2+q^2)}$
centered at $(m,m)$, satisfy
\[
  \frac{1}{4}\Bigl(
    \widetilde M(m{+}p,\,m{+}q)
    +\widetilde M(m{-}q,\,m{+}p)
    +\widetilde M(m{-}p,\,m{-}q)
    +\widetilde M(m{+}q,\,m{-}p)
  \Bigr)
  \;=\;\widetilde M(m,m).
\]
(When $n$ is odd, $m$ is an integer and $\widetilde M(m,m)=M_{m,m}$ is
the actual center entry of $\M$. The even case is treated in
Remark~\ref{rem:even}.)
\end{corollary}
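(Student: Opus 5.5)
The plan is to translate the center to the origin and then check term by term, exactly as in the Remark following Definition~\ref{def:harmonic}. Put $g(u,v)=\widetilde M(m+u,\,m+v)$. Expanding $\alpha(m+u)(m+v)+\beta(m+u)+\gamma(m+v)+\delta$ gives
\[
  g(u,v)=\delta'+\beta' u+\gamma' v+\alpha' uv,
\]
with $\delta'=\widetilde M(m,m)$, $\beta'=\alpha m+\beta$, $\gamma'=\alpha m+\gamma$ and $\alpha'=\alpha$. The four points in the statement are $m$ plus the displacements $(u,v)=(p,q),(-q,p),(-p,-q),(q,-p)$. Each displacement is obtained from the previous one by the quarter turn $(u,v)\mapsto(-v,u)$, so the four points form one $90^\circ$-rotation orbit about $(m,m)$.

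Next I sum $g$ over these four displacements, one coefficient at a time, as in the proof of Lemma~\ref{lem:orbit}.
\begin{itemize}
\item The $u$-coordinates are $p-q-p+q=0$, so the $\beta'$ term vanishes.
\item The $v$-coordinates are $q+p-q-p=0$, so the $\gamma'$ term vanishes.
\item The products $uv$ are $pq,\,-pq,\,pq,\,-pq$, which sum to $0$, so the $\alpha'$ term vanishes.
\end{itemize}
What remains is $4\delta'=4\widetilde M(m,m)$. Dividing by $4$ gives the claimed identity.

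Alternatively, the algebra in Lemma~\ref{lem:orbit} never used that $i,j$ are integers, so it applies verbatim to the polynomial $\widetilde M$ at real points. Setting $i=m+p$, $j=m+q$ and using $2m-i=m-p$, $2m-j=m-q$ recovers precisely the four points of the statement, in the same cyclic order.

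For the side-length claim, the vertices $(m+p,m+q)$ and $(m-q,m+p)$ are adjacent, and their distance is $\sqrt{(p+q)^2+(q-p)^2}=\sqrt{2(p^2+q^2)}$. For $n$ odd, $m$ is an integer and $\widetilde M(m,m)=M_{m,m}$ by construction.

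I expect no real obstacle. The only point needing care is that $n$ need not be odd here, so $m$ may be a half-integer. The argument is polynomial in the real variable $m$ and never uses the integrality of $m$, so it goes through unchanged.
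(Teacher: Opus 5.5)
Your proof is correct and matches the paper. The paper's proof is exactly your ``alternative'' paragraph: it reruns the polynomial computation of Lemma~\ref{lem:orbit} at real $i=m+p$, $j=m+q$. Your centered expansion is the Cartesian form of the Remark following Definition~\ref{def:harmonic}. One harmless slip: that substitution lists the four points in the opposite cyclic order, with $(m+q,m-p)$ second, which does not matter since only the sum enters.
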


\begin{proof}
In the proof of Lemma~\ref{lem:orbit}, replace $M_{i,j}$ by
$\widetilde M(x,y)=\alpha xy+\beta x+\gamma y+\delta$, and the integer
indices $i,j$ by the real numbers $x=m+p$, $y=m+q$. The computation
consists only of polynomial arithmetic in $\alpha,\beta,\gamma,\delta$
and never uses that $i,j$ are integers, so the same calculation applies
verbatim.
\end{proof}

Thus the ``rotation orbits'' treated by Lemma~\ref{lem:orbit} are
merely those squares centered at $(m,m)$, among the infinitely many
such squares, for which $p,q$ are integers small enough to stay inside
the grid. What is essential is not the grid-specific structure of a
rotation orbit, but the algebraic property we have called
\emph{harmonicity}: that each entry is a polynomial
$\alpha ij+\beta i+\gamma j+\delta$ of degree at most $1$ in $i$ and in
$j$ separately. The same mean-value property holds for a square of any
size and orientation about the center.

\begin{remark}
The identity in Lemma~\ref{lem:orbit} holds for \emph{every} index
pair $(i,j)$, not only those forming a rotation orbit of length~$4$.
Substituting $(i,j)=(m,m)$ gives the tautology $4M_{m,m}=4M_{m,m}$,
which simply says that the center fits consistently into the picture
as a degenerate orbit of length~$1$.
\end{remark}

\section{Beyond the Natural Square}

Looking back at the argument, one realizes that the natural square
played a rather modest role: it was simply one example of a harmonic
matrix. What is essential is not the natural square as a starting
point, but the algebraic property of belonging to the class of
harmonic matrices defined above, and this observation opens the door
to a more general statement.

Lemma~\ref{lem:orbit} holds for \emph{any} harmonic matrix, using
nothing more than the defining property that each entry has the form
$\alpha ij+\beta i+\gamma j+\delta$. Corollary~\ref{cor:square}, its
generalization via the continuous extension $\widetilde M$, likewise
holds for any harmonic matrix. Lemma~\ref{lem:closure}, meanwhile,
guarantees that the class of harmonic matrices is closed under matrix
multiplication. The orbit-sum identity therefore propagates beyond
the natural square to every power of every matrix in the class of
harmonic matrices. Indeed, the argument is not specific to the
natural square at all:

\begin{proposition}
Let $n$ be a positive odd integer and $\M$ a harmonic $n\times n$
matrix. Then $\M^k$ is harmonic for every positive integer $k$, and
every rotation-orbit sum of $\M^k$ equals four times its center entry.
\end{proposition}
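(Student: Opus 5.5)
The plan is to combine Lemma~\ref{lem:closure} and Lemma~\ref{lem:orbit}, in exact parallel with the proof of Theorem~\ref{thm:main}, replacing $\A_n$ by an arbitrary harmonic matrix $\M$.

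\textbf{Step 1: $\M^k$ is harmonic for every $k\ge 1$.} I would argue by induction on $k$. The base case $k=1$ is the hypothesis. For the inductive step, suppose $\M^{k}$ is harmonic. Then $\M^{k+1}=\M^{k}\M$ is a product of two harmonic $n\times n$ matrices, so it is harmonic by Lemma~\ref{lem:closure}.

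One point needs checking: the constants $\alpha,\beta,\gamma,\delta$ in Definition~\ref{def:harmonic} may be arbitrary real (or complex) numbers. The proof of Lemma~\ref{lem:closure} is only polynomial arithmetic together with the sums $\sum\ell^p$, so it imposes no integrality or sign condition on these constants. The induction therefore goes through for any harmonic $\M$, not just $\A_n$.

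\textbf{Step 2: the orbit-sum identity.} Since $\M^k$ is harmonic and $n$ is odd, Lemma~\ref{lem:orbit} applies directly with $m=(n-1)/2$. For any index pair $(i,j)$ it gives
\[
(\M^k)_{i,j}+(\M^k)_{j,n-1-i}+(\M^k)_{n-1-i,n-1-j}+(\M^k)_{n-1-j,i}=4(\M^k)_{m,m}.
\]
This is exactly the statement that every rotation-orbit sum equals four times the center entry. The length-one orbit at the center is covered trivially, as noted in the remark following Corollary~\ref{cor:square}.

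\textbf{Main obstacle.} There is no substantive obstacle, since both ingredients are already established. The only step needing care is confirming that Lemma~\ref{lem:closure} is stated and proved for arbitrary harmonic factors, which Step~1 verifies.

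As a by-product, Theorem~\ref{thm:mean} applies to each $\M^k$ as well, because it only uses harmonicity of the matrix.
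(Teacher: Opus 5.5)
Your proposal is correct and matches the paper's argument: induction via Lemma~\ref{lem:closure} shows that every power $\M^k$ is harmonic, and Lemma~\ref{lem:orbit} applied to $\M^k$ gives the orbit-sum identity. This is exactly the proof of Theorem~\ref{thm:main} with $\A_n$ replaced by $\M$.
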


Theorem~\ref{thm:main} is the special case $\M=\A_n$.

\begin{remark}
\label{rem:even}
When $n$ is even, $m=(n-1)/2$ is not an integer, and there is no
center entry on the grid. However, the proof of Lemma~\ref{lem:orbit}
never uses that $m$ is an integer; it relies only on the algebraic
relation $n-1=2m$. Hence, using the continuous extension
$\widetilde M(x,y)=\alpha xy+\beta x+\gamma y+\delta$, the very same
identity
\[
  \widetilde M(i,j)+\widetilde M(j,\,n{-}1{-}i)
  +\widetilde M(n{-}1{-}i,\,n{-}1{-}j)+\widetilde M(n{-}1{-}j,\,i)
  \;=\;4\,\widetilde M(m,m)
\]
holds when $n$ is even as well. This is not merely an analogous
statement; it is a direct application of the same identity. The
right-hand side $\widetilde M(m,m)$ is not an actual matrix entry,
since $m$ is not a lattice point, but it is meaningful as the
\emph{virtual center value} determined by the continuous extension of
the harmonic matrix.
\end{remark}


\begin{thebibliography}{9}

\bibitem{andrews1917}
W.~S.~Andrews,
\emph{Magic Squares and Cubes},
2nd ed., Open Court Publishing, Chicago, 1917;
reprinted by Dover Publications, New York, 1960.

\bibitem{hilllettingtonschmidt2018}
S.~L.~Hill, M.~C.~Lettington, and K.~M.~Schmidt,
Block representations and spectral properties of constant sum matrices,
\emph{Electron. J. Linear Algebra} \textbf{34} (2018), 170--190.

\bibitem{thompson1994}
A.~C.~Thompson,
Odd magic powers,
\emph{Amer. Math. Monthly} \textbf{101} (1994), no.~4,
339--342.

\bibitem{ward1980}
J.~E.~Ward~III,
Vector spaces of magic squares,
\emph{Math. Mag.} \textbf{53} (1980), no.~2, 108--111.

\end{thebibliography}

\renewcommand{\refname}{References}

\bigskip
\section*{Acknowledgments}
We are grateful to the editors of \emph{Elemente der Mathematik} for bringing the authors together and for providing the opportunity that led to this collaboration.

\medskip
\noindent\textbf{Competing interests.}
The authors declare no competing interests.

\medskip
\noindent\textbf{Funding.}
This research received no specific grant from any funding agency
in the public, commercial, or not-for-profit sectors.

\medskip
\noindent\textbf{Data availability.}
No external data sets were used in this research. All numerical results
can be reproduced from the formulas in the text.

\medskip\noindent
\textbf{Addresses}
P.J. Universit\'e de Neuch\^atel, Institut de Math\'ematiques, rue E.-Argand 11, CH-2000 Neuch\^atel, \texttt{paul.jolissaint@ik.me, pajolissaint@gmail.com}
ORCID: 0009-0009-1786-0983\\
K.T. Independent Researcher, Tokyo, Japan, \texttt{mahora1123@gmail.com} ORCID: 0009-0004-3111-8826.

\end{document}